\documentclass[11pt]{article}
\usepackage[T1]{fontenc}
\usepackage{lmodern}
\usepackage[margin=1in]{geometry}
\usepackage{amsmath,amssymb,amsthm,mathtools,microtype}
\usepackage{needspace}
\usepackage{xcolor}
\usepackage[colorlinks=true,linkcolor=blue!45!black,citecolor=blue!45!black,urlcolor=blue!45!black]{hyperref}
\hypersetup{pdftitle={On Kolmogorov's rearrangement problem and Garsia's conjecture},pdfauthor={Mark Lewko}}
\newtheorem{theorem}{Theorem}

\newtheorem{lemma}[theorem]{Lemma}

\theoremstyle{remark}
\newtheorem{remark}[theorem]{Remark}
\numberwithin{equation}{section}
\newcommand{\T}{\mathbb T}
\newcommand{\N}{\mathbb N}
\newcommand{\Z}{\mathbb Z}
\newcommand{\R}{\mathbb R}

\newcommand{\one}{\mathbf 1}

\newcommand{\calF}{\mathcal F}
\newcommand{\calA}{\mathcal A}
\newcommand{\calB}{\mathcal B}

\allowdisplaybreaks[2]
\title{On Kolmogorov's rearrangement problem\\ and Garsia's conjecture}
\author{Mark Lewko}
\date{}
\begin{document}
\maketitle

\begin{abstract}
We give negative answers to Kolmogorov's rearrangement problem and Garsia's conjecture. We construct a complete uniformly bounded orthonormal system for which every rearrangement admits a square-summable series divergent almost everywhere. The construction is built from two copies of the trigonometric system in different orderings. The main ingredient is a combinatorial lemma which finds a prescribed permutation pattern as a subsequence of at least one of two longer permutations. Its proof uses Szemer\'edi's theorem and a counting argument.
\end{abstract}

\section{Introduction}

Let $(\Omega,\mu)$ denote a probability space and let $\{\phi_n\}_{n\geq1}$ be an orthonormal system (ONS) on $\Omega$. For $(a_n)\in\ell^2$, orthogonality guarantees $L^2$ convergence of the series $\sum_n a_n\phi_n$. We call an ONS a \emph{system of convergence} if every square-summable series converges almost everywhere. Kolmogorov's rearrangement problem asks whether every ONS can be permuted to become a system of convergence.\footnote{The problem is generally attributed to Kolmogorov. Bourgain \cite{Bourgain} cites Kolmogorov's 1924 paper \cite{Kolmogorov1924}, but that paper concerns lacunary Fourier convergence and does not state the rearrangement problem.} For general background on orthogonal series, see Olevskii's book \cite{OlevskiiBook}.

The trigonometric system provides a natural example. Kolmogorov stated without proof, as Theorem~III of his 1927 paper with Menshov \cite{KolmogorovMenshov}, that an $L^2$ Fourier series can have an almost everywhere divergent rearrangement. Zahorski published a proof sketch in 1960 \cite{Zahorski}; see also \cite{BKL,Karagulyan}. In 1964, Olevskii \cite{Olevskii1964} constructed the first complete uniformly bounded ONS that is a system of convergence. In 1966, Carleson \cite{Carleson} proved that the trigonometric system in its natural ordering is a system of convergence. Thus permuting the trigonometric system can change whether it is a system of convergence.

For a general ONS, Garsia proved that each square-summable coefficient sequence admits an almost everywhere convergent rearrangement \cite{Garsia1964,Garsia1970}. Here the permutation may depend on the coefficients. Kolmogorov's problem asks for one permutation which works for all square-summable coefficient sequences.

Garsia's conjecture \cite{Garsia1970} is a stronger finite formulation. It asks whether every finite ONS $\{\phi_n\}_{n=1}^N$ admits a permutation $\tau$ of $\{1,\ldots,N\}$ such that
\begin{equation}\label{eq:garsia}
 \left\|\max_{q\leq N}\left|\sum_{j\leq q}
 a_{\tau(j)}\phi_{\tau(j)}\right|\right\|_2
 \leq C\left(\sum_{n=1}^N|a_n|^2\right)^{1/2}
\end{equation}
for all coefficients $\{a_n\}$, with $C$ independent of the system and $N$. This conjecture would imply a positive answer to Kolmogorov's problem.

In 1989, Bourgain \cite{Bourgain} proved that if $|\phi_n|\leq A$, then some permutation satisfies \eqref{eq:garsia} with $C=C_A\log\log(N+3)$, where $C_A$ depends only on $A$. His proof uses random rearrangements and estimates for stochastic processes, including entropy and chaining arguments related to his work on the $\Lambda(p)$ problem \cite{BourgainLambda}. Bourgain also showed, by a combinatorial argument \cite[Section~2]{Bourgain}, that a positive answer to Kolmogorov's problem would imply almost everywhere convergence of the usual Walsh partial sums of $L^2$ functions. This is the Walsh analogue of Carleson's theorem, proved by Billard in 1967 \cite{Billard}. Our work here is inspired by the combinatorial ideas underlying this observation of Bourgain.

Our main result is the following resolution of Kolmogorov's problem and Garsia's conjecture:

\Needspace{5\baselineskip}
\begin{theorem}\label{thm:main}
\leavevmode
\begin{enumerate}
\item There is an absolute constant $c>0$ with the following property. For every $H>0$, there is a finite ONS $\{\phi_n\}_{n=1}^N$ on a probability space $(\Omega,\mu)$, with $|\phi_n|=1$, such that for every permutation $\tau$ of $\{1,\ldots,N\}$ there are real coefficients with $\sum_{n=1}^N a_n^2=1$ satisfying
\[
 \mu\left\{x\in\Omega:\max_{q\leq N}
 \left|\sum_{j\leq q}a_{\tau(j)}\phi_{\tau(j)}(x)\right|>H\right\}\geq c.
\]
\item There is a complete ONS $\{\psi_n\}_{n\geq1}$ on $[0,1]$, with $|\psi_n|=1$, such that for every permutation $\tau$ of $\N$ there are real coefficients $(a_n)\in\ell^2$ for which
\[
 \sup_{q\geq1}\left|\sum_{j=1}^q a_{\tau(j)}\psi_{\tau(j)}(x)\right|=\infty
\]
for almost every $x\in[0,1]$.
\end{enumerate}
There are also real-valued examples for both assertions, uniformly bounded by $\sqrt2$.
\end{theorem}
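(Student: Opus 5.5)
The construction takes two copies of the trigonometric system and places them on the two halves of the probability space. Fix a large $M$. Let $\Omega=\T\times\{0,1\}$ with normalized measure. On each copy we put the exponentials $e(kx)$, $|k|\le M$, and we pair them through a fixed permutation $\sigma$ of $\{-M,\dots,M\}$ (or of $\{1,\dots,M\}$). Concretely, for $k$ in the index set and $\epsilon=\pm1$, define
\[
\phi_{k,\epsilon}(x,0)=e(kx),\qquad \phi_{k,\epsilon}(x,1)=\epsilon\, e(\sigma(k)x).
\]
These functions are orthonormal and satisfy $|\phi|=1$. Moreover, $\phi_{k,+}\pm\phi_{k,-}$ recovers $e(kx)$ on the first copy and $e(\sigma(k)x)$ on the second. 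Taking real and imaginary parts, or cosines and sines scaled by $\sqrt2$, gives the real-valued variant.

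The key input is the classical divergence mechanism for rearranged Fourier series (Kolmogorov--Menshov--Zahorski). For every $H$ there is a finite pattern, that is, a permutation $\pi$ of $\{1,\dots,L\}$ together with frequencies and an $\ell^2$-normalized coefficient vector, such that partial sums of the trigonometric polynomial taken in the order $\pi$ exceed $H$ on a set of measure at least $c$. This holds for the pattern and for any frequencies in a suitable arithmetic progression realizing it, by dilation and translation invariance of Lebesgue measure.

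The combinatorial lemma announced in the abstract should then state the following. If $\sigma$ is chosen suitably, for instance at random or by a specific construction, then for every permutation $\tau$ of $\{1,\dots,N\}$ there is a copy of $\pi$ as a subsequence of the order induced by $\tau$ on the frequencies of one of the two copies, located on an arithmetic progression of frequencies. The order $\tau$ induces two orderings, one of the $k$'s and one of the $\sigma(k)$'s. Szemer\'edi's theorem supplies long arithmetic progressions among the frequencies appearing in a dense set. A counting argument over the $\approx L!$ possible relative orderings on progressions shows that the orderings cannot avoid $\pi$ simultaneously in both copies. The underlying reason is that $\sigma$ scrambles arithmetic progressions, so that avoiding $\pi$ on all progressions in the first copy forces the relevant pattern on progressions in the second.

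Given such a pattern, we put the coefficients on the corresponding pairs $\phi_{k,\pm}$ so that the combination isolates that copy, and zero elsewhere. The interspersed zero terms do not change the partial-sum maxima. Since $\tau$ orders the pairs $(k,+)$ and $(k,-)$ separately, we must also check that the partial sums which split a pair do not destroy the maximal function. One way to handle this is to use only one sign per $k$ with coefficient $a_k$; on the chosen half, the sum then has maximal function exceeding $H$ on measure at least $c/2$. I expect this combinatorial lemma to be the main obstacle: finding the correct $\sigma$ and quantifying the density needed for Szemer\'edi.

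Part 2 follows from Part 1 by a standard gluing argument. Take systems $\Phi_m$ from Part 1 with $H_m=4^m$ and realize them on $[0,1]$ as a tensor or independent-block product. Using Walsh/Rademacher-type independent coordinates keeps $|\psi|=1$ and preserves orthonormality across blocks. Then complete the family to a complete ONS with modulus-one functions, for example by interleaving the system with the full Walsh or trigonometric basis via unitary mixing. Given any permutation $\tau$ of $\N$, its restriction to each block induces a permutation of that block. Apply Part 1 there and use coefficients $2^{-m}a^{(m)}$. The independence of blocks and Borel--Cantelli, applied to events of measure at least $c$ that are independent across $m$, then give $\sup_q|S_q|=\infty$ almost everywhere. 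Here the gain $4^m\cdot2^{-m}\to\infty$, and the contributions from earlier blocks are controlled since they are finite sums.
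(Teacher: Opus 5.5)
Your architecture matches the paper's: two copies of the trigonometric system coupled by $\sigma$, a finite Kolmogorov--Zahorski pattern moved onto arithmetic progressions via \eqref{eq:affine}, and Borel--Cantelli gluing. The combinatorial statement you formulate is exactly Lemma~\ref{lem:two}. But that lemma is the heart of the proof, and you neither prove it nor give a mechanism that would. Szemer\'edi's theorem applied to a dense set of frequencies yields a progression with no control over the order in which its terms occur. A single ordering can also avoid $\pi$ on every progression: if $\tau$ is the identity, every progression appears in increasing order, so every non-identity $\pi$ is avoided. What is needed is a quantitative statement that avoiding orderings are so rare that a random $\sigma$ cannot carry one avoiding ordering to another. ``A counting argument over the $\approx L!$ relative orderings'' does not supply this. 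A direct first-moment argument over permutations would require the number of avoiding permutations of $[N]$ to be $o(\sqrt{N!})$, and your sketch gives no handle on that. The paper obtains the needed rarity from three ideas absent from your proposal.
\begin{enumerate}
\item It coarse-grains $\tau$ into a balanced coloring $c_\tau:[N]\to[K]$ with $K=m^2$, recording which of $K$ consecutive blocks of positions each integer occupies. Then $c_{\sigma\circ\tau}=c_\tau\circ\sigma^{-1}$, and a progression whose colors are distinct and increase in the order $\pi$ gives the required subsequence.
\item Lemma~\ref{lem:count} shows, via the deletion recursion \eqref{eq:recursion}, that at most $(m-1)^N\exp(o(N))$ colorings avoid such a progression. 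Szemer\'edi's theorem enters not to find progressions but to show that the points where an intersection branch is taken, grouped by color set, are progression-free and hence number $o(N)$.
\item For a uniformly random $\sigma$, the expected number of balanced $c$ with both $c$ and $c\circ\sigma^{-1}$ in $\calF_N$ is at most $|\calF_N|^2/T_N\le((m-1)/m)^{2N}\exp(o(N))\to0$.
\end{enumerate}

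There are also secondary gaps.
\begin{itemize}
\item Taking real and imaginary parts does not give the real-valued case. The identity \eqref{eq:affine} relies on $|e(ax)|=1$. With $P_q(y):=\sum_{j\le q}b_je(\pi(j)y)$, the quantity $\max_q|\operatorname{Re}(e(ax)P_q(dx))|$ is not controlled by $\max_q|P_q(dx)|$. The paper instead adjoins an independent variable $t$ and uses $\sqrt2\operatorname{Re}(e(t)\phi_n)$, so that $|S|>H$ forces $\sqrt2|\operatorname{Re}(e(t)S)|>H$ for at least half of all $t$.
\item In the gluing, blocks may interleave in the $\tau$-order. If every block carries coefficients, the partial sums over one block's range include contributions from other blocks, and your remark about finite earlier sums does not handle these. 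You must pass to an infinite subfamily of blocks whose position intervals are disjoint and ordered left to right.
\item The uniformity of $c$ in $H$ and the reality of the coefficients need an argument. The paper takes tails of one a.e.\ divergent series and splits complex coefficients into real and imaginary parts.
\item Completion by ``unitary mixing'' is unspecified. Your own pairs $\phi_{k,\pm}$, with $k$ ranging over $\Z$ and $\sigma$ extended to a permutation of $\Z$, are exactly the paper's $u_n,v_n$. They form an orthonormal basis on each coordinate, and their finite tensor products complete the system.
\end{itemize}
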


The first assertion disproves Garsia's conjecture: after any permutation the maximal operator has norm at least $H\sqrt c$. The second gives one infinite system for which every permutation admits a square-summable series divergent almost everywhere, and hence answers Kolmogorov's problem negatively.

The proof as written gives no useful bound for the dependence of $N$ on $H$, since it uses only qualitative forms of the Fourier divergence theorem and Szemer\'edi's theorem. The finite examples may be enlarged to any greater size by extending $\sigma$ to fix the additional frequencies. Together with Bourgain's bound, the theorem therefore shows that the optimal universal constant in \eqref{eq:garsia} for uniformly bounded systems of size $N$ tends to infinity with $N$, but no faster than $\log\log N$. We do not know its true rate of growth.

The finite construction uses two copies of the trigonometric system. Let $\T:=\R/\Z$ denote the unit circle with normalized Lebesgue measure, and write $e(t):=e^{2\pi it}$ and $[N]:=\{1,\ldots,N\}$. Given a permutation $\sigma$ of $[N]$, set
\begin{equation}\label{eq:construction}
 \phi_n(x,0):=e(nx),\qquad
 \phi_n(x,1):=e(\sigma(n)x),\qquad n\in[N],
\end{equation}
on $\T\times\{0,1\}$, where each copy of $\T$ has measure $1/2$. These functions are orthonormal and have absolute value one. We choose $\sigma$ using the combinatorial lemma in the next section, then apply the classical divergence theorem for rearranged Fourier series.

\section{The combinatorial argument}

Translating the frequencies of a trigonometric polynomial by an integer, or dilating them by a positive integer, preserves the distribution of its maximal absolute partial sum. A finite Fourier example can therefore be placed on any arithmetic progression, provided that the permutation of its frequencies is preserved. This is why arithmetic progressions enter the construction.

Fix a positive integer $m$ and a permutation $\pi$ of $[m]$. In a permuted list of integers, we seek an arithmetic progression $x_1<\cdots<x_m$ whose terms occur as $x_{\pi(1)},\ldots,x_{\pi(m)}$. Other integers in the list may occur between these terms. The following lemma provides the permutation needed for the two-copy construction.

\begin{lemma}\label{lem:two}
Let $\pi$ be a permutation of $[m]$. There exist $N\geq m$ and a permutation $\sigma$ of $[N]$ such that, for every permutation $\tau$ of $[N]$, there is an $m$-term arithmetic progression $x_1<\cdots<x_m$ in $[N]$ for which
\[
 x_{\pi(1)},\ldots,x_{\pi(m)}
\]
is a subsequence of at least one of
\[
 \tau(1),\ldots,\tau(N)
 \qquad\text{and}\qquad
 \sigma(\tau(1)),\ldots,\sigma(\tau(N)).
\]
\end{lemma}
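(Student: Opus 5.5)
The plan is to reformulate the lemma in terms of rank functions. For a permutation $\tau$ of $[N]$, write $r:=\tau^{-1}$, so that $r(x)$ is the position of $x$ in the list $\tau(1),\ldots,\tau(N)$. The list $\sigma(\tau(1)),\ldots,\sigma(\tau(N))$ then has rank function $r\circ\sigma^{-1}$.

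For an $m$-term progression $P=\{x_1<\cdots<x_m\}$ and a rank function $\rho$, let $\mathrm{pat}_\rho(P)\in S_m$ be the permutation recording the order in which $\rho$ visits the terms. The lemma then asks for a $\sigma$ such that, for every $r$, some $m$-term progression $P$ satisfies
\[
\mathrm{pat}_r(P)=\pi \qquad\text{or}\qquad \mathrm{pat}_{r\circ\sigma^{-1}}(P)=\pi .
\]
A single ordering cannot be enough in general. There are orderings of $[N]$ in which no $3$-term progression appears monotonically, and such orderings can presumably be built to avoid any fixed $\pi$ with $m\ge3$. So $\sigma$ has to couple the two copies so that an ordering defeating the first list forces $\pi$ in the second.

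I would build $\sigma$ hierarchically. Take $N=K^{m}$ and write $n\in[N]$ in base $K$ with digits $(d_1,\ldots,d_m)$. Let $\sigma$ permute the digit positions, or more flexibly apply a fixed bijection between $[K]^m$ and $[N]$ sending the coordinate lines of one $m$-dimensional grid to long progressions of the other. Progressions whose step is a power of $K$ are then mapped by $\sigma$ to progressions.

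For a fixed $r$, consider a large family of such structured progressions. I would color each $m$-term progression $P$ by the pair $(\mathrm{pat}_r(P),\mathrm{pat}_{r\circ\sigma^{-1}}(\sigma P))$, which takes at most $(m!)^2$ values.

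The next step uses Szemer\'edi's theorem, in its density form or its multidimensional form. It should give a long progression $Q$ of length $M=M(m)$, inside one of the structured directions, on which a positive proportion of sub-progressions carry the same color. Iterating, we can pass to a sub-configuration on which the restriction of $r$ is "uniform", in the sense that the patterns of all $m$-term sub-progressions are determined by a bounded amount of combinatorial data.

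The counting argument then has to show that no single uniform pattern type can avoid $\pi$ in both copies. Concretely, I would enumerate the possible uniform types. There are finitely many once $M$ is fixed, and $N$ is taken large in terms of $M$, which is why the lemma only asserts existence of some $N\ge m$. For each type, I would check that either $r$ realizes $\pi$ on some sub-progression of $Q$, or the transported ordering $r\circ\sigma^{-1}$ on the image grid does. The point of choosing $\sigma$ as a coordinate shuffle is that it converts order information "between blocks" in one copy into order information "within a progression" in the other.

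I expect this last step to be the main obstacle: showing that the two constraints ($r$ avoids $\pi$, and $r\circ\sigma^{-1}$ avoids $\pi$) are incompatible on a large enough uniform configuration. If the deterministic digit shuffle does not suffice, I would fall back on a random $\sigma$. A union bound over all $N!$ orderings $r$ is hopeless. Instead, I would first use Szemer\'edi's theorem to reduce each $r$ to one of boundedly many uniform types on some sub-configuration. Then I would union-bound over those types together with the polynomially many candidate configurations, showing that a random $\sigma$ realizes $\pi$ in the second copy for every such type with positive probability.
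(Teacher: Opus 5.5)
There is a genuine gap. The step you flag as the main obstacle is that ``$r$ avoids $\pi$'' and ``$r\circ\sigma^{-1}$ avoids $\pi$'' cannot hold simultaneously. That step is the whole content of the lemma, and neither route you sketch can deliver it as set up.

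For the digit shuffle, suppose $\sigma$ maps a progression $P$ increasingly onto a progression, as a digit permutation does on coordinate lines. Then $(r\circ\sigma^{-1})(\sigma(x))=r(x)$, so $\mathrm{pat}_{r\circ\sigma^{-1}}(\sigma P)=\mathrm{pat}_r(P)$. Your pair coloring is therefore redundant. Any $r$ avoiding $\pi$ in the first list automatically avoids it on all these images in the second. The second copy can only help through progressions whose $\sigma$-preimages are \emph{not} progressions, and your plan never analyzes those.

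For the random-$\sigma$ fallback, reducing $r$ to a bounded ``uniform type'' on a configuration of bounded size $M(m)$ gives no leverage either. A random $\sigma$ scatters any fixed bounded set, so knowing $r$ there says essentially nothing about the order in which $r\circ\sigma^{-1}$ visits progressions. A random $\sigma$ can only exploit a \emph{global} description of $r$ that has few enough possibilities to survive a union bound.

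This is exactly what the paper supplies. Color $n\in[N]$ by which of $K=m^2$ consecutive blocks of positions, each of length $N/K$, contains $\tau^{-1}(n)$. The resulting coloring $c_\tau$ has three useful properties:
\begin{itemize}
\item it is balanced, and there are only $T_N=K^N e^{o(N)}$ balanced colorings;
\item it satisfies $c_{\sigma\circ\tau}=c_\tau\circ\sigma^{-1}$;
\item any progression whose colors are distinct and increase along $x_{\pi(1)},\ldots,x_{\pi(m)}$ is automatically a $\pi$-subsequence of the list.
\end{itemize}

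The key input is Lemma~\ref{lem:count}: at most $(m-1)^N e^{o(N)}$ colorings lack such a progression. It is proved by deleting points one at a time via
$|\calA|\le(m-1)|\bigcup_i\calA_i|+\sum_{|Q|=m}|\bigcap_{i\in Q}\calA_i|$.
Szemer\'edi's theorem, in the form $r_m(N)=o(N)$, then bounds the number of points at which an intersection can be chosen. For random $\sigma$, the expected number of balanced $c$ with both $c$ and $c\circ\sigma^{-1}$ bad is at most $|\calF_N|^2/T_N\le((m-1)/m)^{2N}e^{o(N)}\to0$.

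Note that the relevant union bound is over pairs $(c,c\circ\sigma^{-1})$. It needs the bad objects to be fewer than the square root of the total, not a per-$r$ probability below $1/N!$. Coarse-graining orderings into colorings is what makes such a count provable. In this argument Szemer\'edi's theorem is used for counting, not for locating a uniform sub-configuration.
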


For the system \eqref{eq:construction}, a permutation $\tau$ of the functions gives the frequency permutations $\tau$ and $\sigma\circ\tau$ on the two copies of $\T$.

We prove the lemma in two steps: first count the colorings that fail to contain the required progression, then use this count to choose $\sigma$. Here $m$ remains the progression length. We introduce $K\geq m$ available colors, labeled $1,\ldots,K$; the chosen progression may use any $m$ distinct colors. The counting estimate holds for every fixed $K\geq m$, and the final step will use $K=m^2$.

\begin{lemma}\label{lem:count}
Fix integers $m\geq2$ and $K\geq m$, and a permutation $\pi$ of $[m]$. All but at most
\begin{equation}\label{eq:count}
 (m-1)^N\exp(o(N))
\end{equation}
of the $K^N$ colorings $c:[N]\to[K]$ admit an $m$-term arithmetic progression $x_1<\cdots<x_m$ in $[N]$ with distinct colors such that $x_{\pi(j)}$ has the $j$-th smallest color among these $m$ points, for each $j\in[m]$. Equivalently,
\begin{equation}\label{eq:pattern}
 c(x_{\pi(1)})<\cdots<c(x_{\pi(m)}).
\end{equation}
Here $N\to\infty$ with $m$, $K$, and $\pi$ fixed.
\end{lemma}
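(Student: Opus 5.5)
The plan is an entropy, or block-counting, argument. Szemerédi's theorem supplies the progressions, and the number $m-1$ arises from a pigeonhole on how many colors can be ``essentially present'' in a bad coloring. Call a coloring $c:[N]\to[K]$ \emph{bad} if it admits no progression satisfying \eqref{eq:pattern}.

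The lower-order model already shows what the count must be. If $c$ uses only $m-1$ colors, it is bad for trivial reasons, which accounts for $\binom{K}{m-1}(m-1)^N$ colorings. Monotone block colorings show that colorings using $m$ or more colors can also be bad, but only in a highly structured, low-entropy way. The proof must show that every bad coloring is, up to $o(N)$ of entropy, of the first kind ``locally at all scales''.

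The first key step is a \emph{transfer from dominance to the pattern}. Fix $\eta<1/m$ and a block length $L$. Call a block $I$ of $L$ consecutive integers \emph{$k$-dominant} if at least $(1-\eta)L$ of its points have color $k$. Suppose blocks $I_1,\ldots,I_m$ have equal lengths and left endpoints in arithmetic progression, and $I_i$ is $k_{\pi^{-1}(i)}$-dominant with $k_1<\cdots<k_m$. Then the translates back to $[L]$ of the sets $\{x\in I_i: c(x)=k_{\pi^{-1}(i)}\}$ each have density greater than $1-1/m$. Hence they share a common offset $r$, and the points $\min I_i+r$ give a progression satisfying \eqref{eq:pattern}. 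Thus badness of $c$ passes to the block-level coloring by dominant colors, restricted to dominant blocks, and Szemerédi's theorem applies on the block level.

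The second step is the main counting argument, by induction on the scale. Partition $[N]$ into blocks of length $L$, with $L$ large but fixed as $N\to\infty$, and classify the blocks as follows.
\begin{enumerate}
\item If a positive proportion of blocks are dominant for each of at least $m$ colors, then Szemerédi's theorem, in a form with pattern-colored progressions, is used to produce the forbidden configuration. This is obtained by applying Szemerédi to the set of blocks dominant for a fixed color and then a colored/multidimensional variant to place the $m$ colors at the right positions. This step is delicate, since distinct dense sets need not contain a ``rainbow'' progression in a prescribed order, so I expect it to require iterating over scales instead of a single application.
\item Dominant blocks cost only $\exp\bigl(L(h(\eta)+\eta\log K)\bigr)$ colorings each, which is $(m-1)^{L}e^{o(L)}$ once $\eta\to0$.
\item Non-dominant blocks are handled by restricting to a sub-block scale $L'\ll L$ and repeating.
\end{enumerate}
Summing over the $\exp(o(N))$ choices of block types and color labels then gives \eqref{eq:count}.

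I expect the main obstacle to be the step where the dominant colors at a given scale number at least $m$ but appear in an order that avoids the pattern at that scale; the halves example, monotone blocks, shows this really happens. My first attempt would be a multiscale compactness argument. Assign to each bad coloring a tree of scales $L_1\gg L_2\gg\cdots$ and show, using Szemerédi at each level, that at all but $o(N)$ positions at most $m-1$ colors have non-negligible density in the local window. An entropy bound (Shearer-type, over windows) then yields at most $\log(m-1)+o(1)$ bits per position.
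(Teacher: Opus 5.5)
Your proposal does not yet contain a proof. The step you flag as the main obstacle carries essentially all of the lemma, and the route you suggest for it cannot work.

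\textbf{The dominance bookkeeping.} A dominant block costs at most $\exp(L(h(\eta)+\eta\log K))$ colorings, far below $(m-1)^L$ for $m\ge3$ and small $\eta$. So the mass $(m-1)^N$ must come from colorings that mix $m-1$ colors freely. Such blocks are non-dominant at every scale, since a color of density about $1/(m-1)\le 1/2$ never reaches $1-\eta$. Item~3 therefore recurses without ever producing a saving, and nothing in the proposal bounds the number of ways to color blocks that stay non-dominant. Item~1 is false as stated, as you yourself observe. Take a coloring by $m$ consecutive monochromatic intervals, with colors increasing from left to right if $\pi$ is not the identity and decreasing if it is. It is bad, yet each color is dominant on a positive proportion of blocks. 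So no colored form of Szemer\'edi's theorem can produce the forbidden configuration from density information alone.

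\textbf{The structural fix fails.} The claim in your last paragraph is false for individual bad colorings. Take $m=3$, $\pi$ the identity, and let $c(x)=2$ for $x\equiv0,1$, $c(x)=1$ for $x\equiv2$, $c(x)=3$ for $x\equiv3\pmod 4$. A progression $x_1<x_2<x_3$ with $c(x_1)=1$ and $c(x_3)=3$ would need $x_3-x_1\equiv1\pmod 4$. This is impossible since $x_3-x_1=2d$, so $c$ is bad. Yet every window of length $L\ge4$ contains all three colors with density at least $\lfloor L/4\rfloor/L$, and no block of length at least $4$ is dominant. Thus no argument of the form ``every bad coloring has at most $m-1$ locally dense colors, and such colorings are few'' can succeed. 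You need to bound the \emph{number} of bad colorings without a structure theorem for each one.

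\textbf{The paper's route.} The paper does this with a one-point deletion recursion,
$|\calA|\le(m-1)\bigl|\bigcup_i\calA_i\bigr|+\sum_{|Q|=m}\bigl|\bigcap_{i\in Q}\calA_i\bigr|$.
Each point either contributes a factor $m-1$, or is an intersection point $x\in S$ at which a whole $m$-set $Q_x$ of colors is simultaneously available. On a nonzero branch, every choice of colors from the sets $Q_x$ extends to a bad coloring. Hence, for each fixed $Q$, the points with $Q_x=Q$ contain no $m$-term progression; otherwise give $x_{\pi(j)}$ the $j$-th smallest element of $Q$. Szemer\'edi's theorem in $[N]$ itself then gives $|S|\le\binom{K}{m}r_m(N)=o(N)$. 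Summing $\binom{N}{j}\binom{K}{m}^{j}(m-1)^{N-j}$ over $j\le|S|$ yields the bound $(m-1)^N e^{o(N)}$, with no blocks, dominance, or multiscale analysis.
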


For the application to Lemma~\ref{lem:two}, where $K=m^2$, the point is that the square of this bound is exponentially smaller than $K^N=m^{2N}$. The bound is sharp up to the $\exp(o(N))$ factor, since a coloring using at most $m-1$ colors admits no progression with $m$ distinct colors.

\begin{proof}
Let $\calF_N$ be the family of colorings $c:[N]\to[K]$ containing no progression satisfying \eqref{eq:pattern}. We count these colorings by deleting the points $N,N-1,\ldots,1$ in turn.

\emph{The counting step.}
Consider any family $\calA$ of colorings of $[r]$, where $1\leq r\leq N$. We delete the last point $r$. For each color $i\in[K]$, let
\[
 \calA_i:=\{c|_{[r-1]}:c\in\calA,\ c(r)=i\}.
\]
Here $[0]:=\varnothing$. Thus $\calA_i$ consists of the colorings of $[r-1]$ that extend to a member of $\calA$ by giving $r$ color $i$. We claim that
\begin{equation}\label{eq:recursion}
 |\calA|\leq(m-1)\left|\bigcup_{i=1}^K\calA_i\right|
 +\sum_{\substack{Q\subseteq[K]\\|Q|=m}}
 \left|\bigcap_{i\in Q}\calA_i\right|.
\end{equation}
Indeed, a coloring of $[r-1]$ that belongs to exactly $s$ of the families $\calA_i$ has $s$ extensions in $\calA$. If $s\geq1$, it contributes $m-1+\binom sm$ on the right, and this is at least $s$: for $s\leq m$ this is clear, and for $s>m$ we have $\binom sm\geq\binom s1=s$. If $s=0$, it contributes zero to both sides.

Apply \eqref{eq:recursion} first with $\calA=\calF_N$ and $r=N$, then to each family on the right with $r=N-1$, and continue. At each application, $\calA$ denotes the family currently being counted, and $\calA_i$ is defined from that family by deleting its last point. A union contributes a factor $m-1$; an intersection contributes a choice of an $m$-element color set $Q$.

\Needspace{6\baselineskip}
\emph{Controlling the intersections.}
Fix one term in the resulting sum. Let $S\subseteq[N]$ be the points at which an intersection was chosen, and let $Q_x$ be the color set chosen at $x\in S$. These choices determine the term. Once all points have been deleted, the remaining family is either empty or consists of the unique coloring of the empty set. A nonzero term therefore has weight $(m-1)^{N-|S|}$.

For such a term, we may prescribe any color from $Q_x$ at each $x\in S$ and extend these choices to a coloring in $\calF_N$. To see this, restore the points in reverse order. Membership in a union allows at least one color at the restored point; membership in the chosen intersection allows every color in $Q_x$, so we use the prescribed one. The colors outside $S$ may depend on these prescriptions.

Fix an $m$-element color set $Q=\{q_1<\cdots<q_m\}$. The points $x\in S$ with $Q_x=Q$ contain no $m$-term arithmetic progression. Otherwise, on such a progression $x_1<\cdots<x_m$, prescribe color $q_j$ at $x_{\pi(j)}$. The extension just described would belong to $\calF_N$ and satisfy \eqref{eq:pattern}, a contradiction.

Let $r_m(N)$ denote the largest size of a subset of $[N]$ containing no $m$-term arithmetic progression. By Szemer\'edi's theorem \cite{Szemeredi}, $r_m(N)=o(N)$. Each of the $\binom Km$ classes of points with the same color set has size at most $r_m(N)$, and hence
\[
 |S|\leq\binom Km r_m(N)=o(N)
\]
uniformly over the nonzero terms. Let $d_N$ be the largest possible $|S|$ among these terms; thus $d_N=o(N)$.

For $|S|=j$, there are $\binom Nj$ choices of $S$ and $\binom{K}{m}^{j}$ choices of its color sets. Summing the weights gives
\[
 |\calF_N|\leq\sum_{j=0}^{d_N}\binom Nj\binom{K}{m}^{j}(m-1)^{N-j}
 \leq(m-1)^N\exp(o(N)),
\]
where the last estimate uses $d_N=o(N)$.
\end{proof}

\begin{proof}[Proof of Lemma~\ref{lem:two}]
For $m=1$, take $N=1$. For $m\geq2$, set $K:=m^2$ and take $N$ divisible by $K$. Call a coloring $c:[N]\to[K]$ \emph{balanced} if each color is used exactly $N/K$ times, and let $\calB_N$ be the family of balanced colorings. Then
\[
 |\calB_N|=:T_N=\frac{N!}{((N/K)!)^K}=K^N\exp(o(N)).
\]
Let $\calF_N$ be the exceptional family in Lemma~\ref{lem:count}. For a fixed balanced coloring $c$ and a uniformly random permutation $\sigma$ of $[N]$, the coloring $c\circ\sigma^{-1}$ is uniform on $\calB_N$. Thus the expected number of balanced colorings $c$ for which both $c$ and $c\circ\sigma^{-1}$ belong to $\calF_N$ is
\[
 \frac{|\calF_N\cap\calB_N|^2}{T_N}
 \leq\frac{|\calF_N|^2}{T_N}
 \leq\left(\frac{(m-1)^2}{m^2}\right)^N\exp(o(N))\longrightarrow0.
\]
For $N$ sufficiently large, choose $\sigma$ for which there is no such coloring.

Now fix an arbitrary permutation $\tau$ of $[N]$. Partition the positions in the sequence $\tau(1),\ldots,\tau(N)$ into $K$ consecutive intervals, each of length $N/K$. Define the coloring $c_\tau:[N]\to[K]$ by
\begin{equation}\label{eq:block-coloring}
 c_\tau(\tau(j))=\ell
 \quad\text{when}\quad
 (\ell-1)\frac{N}{K}<j\leq\ell\frac{N}{K},
 \qquad 1\leq\ell\leq K.
\end{equation}
Thus the color of an integer records the interval in which it appears in this particular permutation. Define $c_{\sigma\circ\tau}$ by the same rule for the sequence $\sigma(\tau(1)),\ldots,\sigma(\tau(N))$. These colorings are balanced, and
\[
 c_{\sigma\circ\tau}(\sigma(n))=c_\tau(n),
 \qquad n\in[N].
\]
In particular, $c_{\sigma\circ\tau}=c_\tau\circ\sigma^{-1}$. By the choice of $\sigma$, at least one of $c_\tau$ and $c_{\sigma\circ\tau}$ lies outside $\calF_N$. Lemma~\ref{lem:count} supplies a progression whose terms, permuted by $\pi$, have strictly increasing colors for that coloring. By \eqref{eq:block-coloring}, smaller colors correspond to earlier positions in the corresponding permutation. Hence $x_{\pi(1)},\ldots,x_{\pi(m)}$ is the required subsequence.
\end{proof}

\section{Proof of Theorem~\ref{thm:main}}

We first record a finite consequence of the classical divergence theorem for rearranged Fourier series \cite{KolmogorovMenshov,Zahorski}.

\begin{lemma}\label{lem:fourier}
There is $\delta>0$ such that, for every $H>0$, there are $m$, a permutation $\pi$ of $[m]$, and real coefficients $b_1,\ldots,b_m$ satisfying
\begin{equation}\label{eq:fourier}
 \sum_{j=1}^m b_j^2=1,\qquad
 \left|\left\{x\in\T:\max_{q\leq m}
 \left|\sum_{j\leq q}b_j e(\pi(j)x)\right|>H\right\}\right|\geq\delta.
\end{equation}
\end{lemma}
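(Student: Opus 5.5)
We will derive Lemma~\ref{lem:fourier} from the classical theorem of Kolmogorov--Menshov and Zahorski \cite{KolmogorovMenshov,Zahorski}. That theorem provides a function $f\in L^2(\T)$ and a rearrangement of its Fourier series that diverges almost everywhere. We may take the rearrangement to be unboundedly divergent, in the sense that the partial sums have $\sup_q|S_q(x)|=\infty$ almost everywhere; this is the form in which the theorem is usually proved. We will also arrange for real coefficients and positive frequencies, and then truncate the series to a finite block.

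\emph{Reduction to positive frequencies and real coefficients.} Write the rearranged series as $\sum_k c_k e(n_k x)$ with distinct integers $n_k$ and $\sum|c_k|^2<\infty$. Since $c_k$ may be complex, split it as $c_k=\alpha_k+i\beta_k$. On a set of positive measure, one of the two series $\sum\alpha_k e(n_kx)$ or $\sum\beta_k e(n_kx)$ has unbounded partial sums, taken in the same order. Next we make the frequencies positive. Multiplying by $e(Mx)$ for a large integer $M$ does not change the modulus of any partial sum. Since the frequencies are unbounded, the shift is done only after truncation to a finite block, as follows.

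\emph{Finitization.} Let $E$ be the set where $\sup_q|S_q(x)|=\infty$, and let $\delta:=|E|/2>0$. Given $H$, choose $\epsilon>0$ small and a tail index $k_0$ with $\sum_{k>k_0}|c_k|^2\le\epsilon^2$. The partial sums of the head $k\le k_0$ form a fixed trigonometric polynomial, so they are bounded by some constant $B$. On $E$ the tail sums $\sum_{k_0<k\le q}c_ke(n_kx)$ are therefore unbounded. By continuity of measure there is a $q_1$ such that the set where $\max_{k_0<q\le q_1}|\sum_{k_0<k\le q}c_ke(n_kx)|>H\epsilon$ has measure at least $\delta$. Normalize the block coefficients to unit $\ell^2$ norm; the normalizing factor is at most $1/\epsilon$ (and if the block's norm is smaller than $\epsilon$, normalizing only helps). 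The normalized block then exceeds $H$ on a set of measure at least $\delta$.

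Now shift by $e(Mx)$ with $M>\max|n_k|$, so that all frequencies lie in $[1,2M]$. Set $m:=2M$, and pad with zero coefficients at the unused frequencies, placed at the end of the order. Let $\pi$ list the frequencies in their order of summation. Call the resulting coefficients $b_j$. This gives \eqref{eq:fourier}, with $\delta$ independent of $H$.

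The main obstacle is the input itself. We need a version of the divergence theorem giving unbounded partial sums on a set of positive measure, rather than mere divergence, with a measure bound that is uniform in $H$. If only a.e. divergence is available, a fallback is the quantitative form used in the Zahorski-type constructions: for every $H$, a finite polynomial with rearranged partial sums exceeding $H\|\cdot\|_2$ on a set of measure at least an absolute constant. This is exactly the statement \eqref{eq:fourier}, and we would cite \cite{Zahorski,BKL} for it. The real-part reduction loses at most a factor $2$ in the measure, so it fits either route.
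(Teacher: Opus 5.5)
Your finitization, the real/imaginary split and the frequency shift are all fine. The gap is the input: you assume, without justification, that some rearranged $L^2$ Fourier series has $\sup_q|S_q(x)|=\infty$ on a set of positive measure. The classical theorem (Kolmogorov--Menshov, Zahorski) only gives almost everywhere \emph{divergence}. ``This is the form in which the theorem is usually proved'' is a claim about the literature, not an argument. Your fallback of citing \eqref{eq:fourier} directly from the Zahorski-type literature simply assumes the lemma. Unboundedness could be recovered from plain divergence, e.g.\ by Stein's maximal principle, since the rearranged partial sums are $L^2$-bounded projections commuting with translations, but that is far heavier than necessary. There is also a small slip: the normalizing factor is at least $1/\epsilon$, not at most.

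The missing idea is that mere divergence suffices, and your tail truncation is exactly what makes it work. In your argument the truncation does nothing, since under unboundedness you could take $k_0=0$. Divergence a.e.\ means the Cauchy criterion fails a.e. Hence for some $s$, the set $E_s$ of points $x$ such that for every $r$ there are $v>u\ge r$ with $|S_v(x)-S_u(x)|>1/s$ has measure $\eta>0$. With $\varepsilon:=1/s$, this gives $\sup_{q>k_0}|S_q-S_{k_0}|>\varepsilon/2$ on $E_s$ for \emph{every} $k_0$. Now choose $k_0$ so that the tail has $\ell^2$ norm below $\varepsilon/(4H)$. Continuity of measure gives $q_1$ with $\max_{k_0<q\le q_1}|S_q-S_{k_0}|>\varepsilon/2$ on a set of measure at least $\eta/2$. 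Normalizing the block multiplies by more than $4H/\varepsilon$, so its maximal partial sum exceeds $2H$ there, which leaves room for the real/imaginary split at measure $\eta/4$. A fixed oscillation $\varepsilon$ is amplified by an arbitrarily small tail norm. Since $E_s$ does not depend on $H$, the uniformity of $\delta$ that you worried about comes for free. This is the paper's proof.
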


\begin{proof}
Fix a divergent rearranged $L^2$ Fourier series, written using the characters $e(kx)$, $k\in\Z$, and let $T_q$ denote its partial sums. For $s\geq1$, let $E_s$ be the set of points $x$ such that, for every $r$, some $v>u\geq r$ satisfy
\[
 |T_v(x)-T_u(x)|>\frac1s.
\]
The union of the sets $E_s$ has full measure. Choose $s$ such that $|E_s|=:\eta>0$, and set $\varepsilon:=1/s$. For every $r$,
\[
 \sup_{q>r}|T_q-T_r|>\frac{\varepsilon}{2}
\]
on $E_s$. Choose $r$ so that the $\ell^2$ norm of the coefficient tail is smaller than $\varepsilon/(4H)$. The sets
\[
 \left\{x:\max_{r<q\leq M}|T_q(x)-T_r(x)|>\frac{\varepsilon}{2}\right\},
 \qquad M>r,
\]
increase to a set containing $E_s$. Hence some $M$ gives a set of measure at least $\eta/2$. Normalize the coefficient vector from the finite tail $T_M-T_r$. Its maximal partial sum exceeds $2H$ on this set.

Write this normalized coefficient vector as $u+iv$, where $u$ and $v$ are real vectors. By the triangle inequality, at each point where its maximal partial sum exceeds $2H$, the maximal sum associated with either $u$ or $v$ exceeds $H$. Thus one of these two real vectors has maximal sum exceeding $H$ on a set of measure at least $\eta/4$. Its norm is at most one, and normalizing it can only increase the maximal sum. Translating the frequencies to positive integers and inserting missing frequencies with coefficient zero gives \eqref{eq:fourier} with $\delta:=\eta/4$.
\end{proof}

The same estimate holds when the frequencies $1,\ldots,m$ are replaced by an arithmetic progression $a+d,\ldots,a+md$, where $a\in\Z$ and $d$ is a positive integer. Indeed,
\begin{equation}\label{eq:affine}
 \sum_{j\leq q}b_j e((a+d\pi(j))x)
 =e(ax)\sum_{j\leq q}b_j e(\pi(j)dx)
\end{equation}
and $|e(ax)|=1$, while $x\mapsto dx$ preserves measure.

Apply Lemma~\ref{lem:two} to the permutation $\pi$ of Lemma~\ref{lem:fourier} and form the system \eqref{eq:construction} with the resulting $N$ and $\sigma$. Fix any permutation $\tau$ of $[N]$. Lemma~\ref{lem:two} supplies a progression $x_1<\cdots<x_m$ such that $x_{\pi(1)},\ldots,x_{\pi(m)}$ occurs as a subsequence of the frequencies of $\phi_{\tau(1)},\ldots,\phi_{\tau(N)}$ on one copy of $\T$. Write $x_j=a+dj$. Give the function with frequency $x_{\pi(j)}$ on this copy the coefficient $b_j$, and give all other functions coefficient zero. On that copy, the maximal absolute partial sum is the maximum absolute value of the sums in \eqref{eq:affine}. By \eqref{eq:fourier}, it exceeds $H$ on a set of measure at least $\delta/2$ in $\T\times\{0,1\}$. This proves the first assertion with $c:=\delta/2$.

\subsection*{The infinite system}

For each $r\geq1$, let $\Phi_r$ be a system of the form \eqref{eq:construction} furnished by the finite construction with $H=4^r$, and denote its defining permutation by $\sigma_r$. Regard these systems as functions of separate coordinates on
\[
 \Omega:=\prod_{r\geq1}(\T\times\{0,1\}).
\]
Equip $\Omega$ with product probability measure $\mu$. Since every function in $\Phi_r$ has mean zero, the union of these finite systems is an ONS, with every function of absolute value one. Enumerate this union as $\{\psi_n\}_{n\geq1}$. The probability space $\Omega$ may be identified modulo null sets with $[0,1]$.

Fix a permutation $\tau$ of $\N$. Let $I_r$ be the smallest interval containing all positions occupied by members of $\Phi_r$ in the sequence $\psi_{\tau(1)},\psi_{\tau(2)},\ldots$. We may choose an infinite set $J\subseteq\N$ so that the intervals $I_r$, $r\in J$, are pairwise disjoint and ordered from left to right by $r$. Indeed, every finite initial segment of this sequence meets only finitely many of the families. After choosing an interval, we can therefore choose a larger $r$ for which $I_r$ lies entirely to its right. For each $r\in J$, apply the finite assertion to the permutation in which the functions of $\Phi_r$ appear. Multiply the resulting unit coefficient vector by $2^{-r}$, and set all other coefficients equal to zero. These coefficients are square-summable, since their squared sum is $\sum_{r\in J}4^{-r}$.

The finite assertion supplies an event $E_r$, depending only on the $r$th coordinate, with $\mu(E_r)\geq\delta/2$. Within $I_r$, only the functions of $\Phi_r$ carry nonzero coefficients, since the intervals $I_{r'}$ with $r'\in J$ are disjoint and all remaining coefficients vanish. On $E_r$, the partial sum at some position inside $I_r$ therefore differs by more than $2^{-r}4^r=2^r$ from the partial sum immediately preceding $I_r$, so at least one partial sum has absolute value greater than $2^{r-1}$. The events $E_r$ are independent because they depend on distinct coordinates, and their measures have divergent sum. The second Borel--Cantelli lemma therefore shows that their limsup has full measure. At every point of this limsup the partial sums are unbounded. By the Riesz--Fischer theorem, the series is the orthogonal expansion of an $L^2$ function. This proves the divergence assertion in the second part of the theorem.

\subsection*{Real-valued systems}

For the real-valued case, adjoin an independent variable $t\in\T$ to each coordinate and replace the finite functions by
\[
 f_n(x,\epsilon,t):=\sqrt2\operatorname{Re}\bigl(e(t)\phi_n(x,\epsilon)\bigr).
\]
Averaging in $t$ shows that these functions are orthonormal, and each is bounded by $\sqrt2$. The coefficients used above are real. If a complex partial sum $S$ has $|S|>H$, then $\sqrt2|\operatorname{Re}(e(t)S)|>H$ for at least half the values of $t$. Thus every permutation of the $f_n$ admits a unit coefficient vector whose maximal sum exceeds $H$ on a set of measure at least $\delta/4$.

The resulting finite systems have mean zero, so the same construction on a product probability space gives the infinite real-valued example. We may take $c:=\delta/4$ throughout.

\begin{remark}[Completeness]
Extend each $\sigma_r$ to a permutation of $\Z$ by fixing $0$ and the unused positive integers and setting $\sigma_r(-n)=-\sigma_r(n)$ for $n\geq1$. Let $n$ range over $\Z$ in \eqref{eq:construction}, write the resulting functions as $u_n$, and adjoin
\[
 v_n(x,\epsilon):=(-1)^\epsilon u_n(x,\epsilon).
\]
Then
\[
 \langle u_n,u_m\rangle=\langle v_n,v_m\rangle=\delta_{nm},
 \qquad
 \langle u_n,v_m\rangle
 =\frac12\bigl(\delta_{nm}-\delta_{\sigma_r(n),\sigma_r(m)}\bigr)=0.
\]
Moreover,
\[
 \frac{u_n+v_n}{2}=\one_{\{\epsilon=0\}}e(nx),
 \qquad
 \frac{u_n-v_n}{2}=\one_{\{\epsilon=1\}}e(\sigma_r(n)x).
\]
Since $\sigma_r$ permutes $\Z$, the functions $u_n$ and $v_n$ form an orthonormal basis on the $r$th coordinate. Their finite tensor products give a complete complex system with absolute value one.

For the real example, first take products of these coordinate basis functions with the characters $e(kt)$, $k\in\Z$, of the additional variable $t$. Form finite tensor products and replace nonreal conjugate pairs by normalized real and imaginary parts. The bound is $\sqrt2$. Setting the additional coefficients to zero retains the divergence property, so the systems in the second assertion may indeed be taken complete.
\end{remark}

\section*{Commentary, acknowledgements, and AI usage}

I would like to thank Jean Bourgain for encouraging me to work on this problem. I first learned of it from his paper \cite{Bourgain} while a graduate student and have been actively thinking about it ever since. When I first met Jean at the Institute for Advanced Study, I asked him about the problem. Although he never explicitly expressed an opinion on whether the answer was positive or negative, our conversations left me with the impression that he was optimistic about a positive answer. He suggested that ideas from the then-recent solution of the Kadison--Singer problem \cite{MSS} might be useful in this direction.

\Needspace{6\baselineskip}
I worked extensively on this project using ChatGPT as a proof assistant and sounding board. Over the course of hundreds of prompts and interactions, I developed a much more complicated counterexample based on Walsh series and the hypergraph container results of Saxton and Thomason \cite{SaxtonThomason}. After further reflection on that example, I found the approach presented here, which is much simpler and more intuitive.

\begingroup
\small

\medskip
\noindent\textsc{Enfield, New Hampshire, USA}\\[2pt]
\textit{Email address:} \href{mailto:mlewko@gmail.com}{\texttt{mlewko@gmail.com}}
\endgroup
\end{document}